\newtheorem{thm}{\bf Theorem}
\newtheorem{lem}[thm]{\bf Lemma}
\newtheorem{prop}[thm]{\bf Proposition}
\newcommand{\lc}{locally finite}
\newcommand{\X}{\mathcal{X}}
\newcommand{\Y}{\mathcal{Y}}
\newcommand{\ol}{\overline}
\begin{document}
\author{Eloisa Detomi}
\address{Dipartimento di Matematica, Universit\`a di Padova, 
 Via Trieste 63, 35121 Padova , Italy}
\email{detomi@math.unipd.it}
\author{Marta Morigi}
\address{Dipartimento di Matematica, Universit\`a di Bologna,
Piazza di Porta San Donato 5, 40126 Bologna, Italy}
\email{mmorigi@dm.unibo.it}
\author{Pavel Shumyatsky}
\address{Department of Mathematics, University of Brasilia,
Brasilia-DF, 70910-900 Brazil}
\email{pavel@unb.br}

\title[Commutators in profinite groups]{Commutators and pronilpotent subgroups in profinite groups}
 \thanks{2010 {\it Mathematics Subject Classification. } 20E18; 20F12; 20F14.}
 \thanks{Keywords: profinite groups; commutators; locally finite groups}
 \thanks{ The research of the first two authors was partially supported by MIUR and GNSAGA. 
 The research of P. Shumyatsky was  supported by CAPES and CNPq.}
\begin{abstract}
Let $G$ be a profinite group in which all pronilpotent subgroups generated by commutators are periodic. 
We prove that $G' $ is locally finite.

\end{abstract}

\maketitle

\section{Introduction} An element of a group $G$ that can be written in the form $x^{-1}y^{-1}xy$ for suitable $x,y\in G$ is called commutator. A number of outstanding results about commutators in finite and profinite groups have been obtained in recent years. In this context we mention the proof by Liebeck, O'Brien, Shalev and Tiep \cite{lost} of Ore's conjecture: Every element of a finite simple group is a commutator. Another significant result is due to Nikolov and Segal, who proved that if $G$ is an $m$-generated finite group, then every element of $G'$ is a product of $m$-boundedly many commutators \cite{nisegal}. Here, as usual, $G'$ denotes the derived subgroup of $G$, that is, the subgroup generated by all commutators. One can ask a general question -- given a group $G$ with certain specific restrictions on commutators, what kind of information on the structure of $G'$ one can deduce?

Let $G$ be a finite group and $P$ a Sylow $p$-subgroup of $G$. A corollary of the Focal Subgroup Theorem \cite[Theorem 7.3.4]{go} is that $P\cap G'$ is generated by commutators. From this one immediately deduces that if all nilpotent subgroups generated by commutators have exponent dividing $e$, then the exponent of the derived group $G'$ divides $e$, too. The routine inverse limit argument allows to easily obtain a related fact for profinite groups: if $G$ is a profinite group in which all pro-$p$ subgroups generated by commutators have finite exponent dividing $e$, then the exponent of the derived group $G'$ divides $e$. Recall that a group is said to be of finite exponent $e$ if $x^e=1$ for each $x\in G$.  A profinite group is a topological group that is isomorphic to an inverse limit of finite groups. The textbooks \cite{ribes-zal} and \cite{wilson} provide a good introduction to the theory of profinite groups. In the context of profinite groups all the usual concepts of group theory are interpreted topologically. In particular, by a subgroup of a profinite group we mean a closed subgroup. A subgroup is said to be generated by a set $S$ if it is topologically generated by $S$.

The purpose of the present paper is to prove the following theorem.
\begin{thm}\label{main}
Let $G$ be a profinite group in which all pronilpotent subgroups generated by commutators are periodic. Then $G' $ is locally finite. 
\end{thm}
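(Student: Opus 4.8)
The plan is to reduce the statement to a torsion condition and then invoke Zelmanov's theorem that a torsion profinite group is locally finite; thus it suffices to prove that $G'$ is periodic. As a first observation, every commutator $c\in G$ has finite order, since the procyclic subgroup $\overline{\langle c\rangle}$ is pronilpotent and is generated by the single commutator $c$, so it is periodic by hypothesis. The harder task is to control arbitrary elements of $G'$, which are only limits of products of commutators, and in particular to understand how the finite orders of individual commutators combine across different primes.

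First I would exploit the consequence of the Focal Subgroup Theorem recalled in the introduction. For a prime $p$ and a pro-$p$ Sylow subgroup $P$ of $G$, the intersection $P\cap G'$ is topologically generated by commutators; being a pro-$p$ group it is pronilpotent, so the hypothesis forces $P\cap G'$ to be periodic, and Zelmanov's theorem then makes it locally finite. Since the Sylow $p$-subgroups of $G'$ are exactly the subgroups $P\cap G'$ up to conjugacy, every Sylow subgroup of $G'$ is locally finite. In particular no element of $G'$ can have a pro-$p$ component isomorphic to $\mathbb{Z}_p$: such a component would itself lie in some $P\cap G'$ and have infinite order, contradicting the periodicity of that pronilpotent, commutator-generated subgroup.

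It remains to rule out an element $g\in G'$ of infinite order whose associated procyclic subgroup $\overline{\langle g\rangle}=\prod_{p\in\pi}C_p$ has all components $C_p$ finite but with $\pi$ infinite. This is the main obstacle, since the periodicity of each individual Sylow subgroup of $G'$ is perfectly consistent with such a $g$; the difficulty is genuinely a cross-prime one. Here I would proceed by contradiction: working in the finite quotients $G/N$ and again using the Focal Subgroup Theorem, for each $p\in\pi$ one can extract an honest commutator $c_p\in G$ that is a nontrivial $p$-element, because a Sylow $p$-subgroup of $(G/N)'$ is generated by commutators that are $p$-elements, at least one of which is nontrivial, and such commutators lift to commutators of $G$. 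The aim is then to assemble infinitely many of the $c_p$ into a single pronilpotent subgroup $H=\overline{\langle c_p : p\in\pi'\rangle}$ for some infinite $\pi'\subseteq\pi$: such an $H$ would be generated by commutators and would contain, via its Cartesian decomposition into nontrivial Sylow subgroups, an element of infinite order, contradicting the hypothesis.

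The crux, and the step I expect to be the main difficulty, is precisely guaranteeing that the chosen commutators generate a \emph{pronilpotent} subgroup, i.e. that their mutual interaction across primes vanishes. I would address this by first reducing to a finitely generated $G$ (local finiteness of $G'$ is detected on finitely generated subgroups, each of which inherits the hypothesis), so that $G'$ is closed with bounded commutator width by Nikolov--Segal, and then by relocating the construction inside a suitable pronilpotent section of $G$; the pronilpotent case itself is immediate, because there $G'$ is a pronilpotent subgroup generated by commutators and hence periodic by hypothesis. Combining the absence of $\mathbb{Z}_p$-components with the impossibility of infinitely many nontrivial finite components shows that every element of $G'$ has finite order, whence $G'$ is periodic and, by Zelmanov, locally finite.
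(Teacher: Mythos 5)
Your proposal correctly identifies the easy parts (commutators have finite order; Sylow subgroups of $G'$ are periodic via the focal subgroup theorem, which is exactly the paper's Lemma \ref{G'}) and, to your credit, it also correctly identifies where the real problem lies: an element of $G'$ could a priori have infinitely many nontrivial finite Sylow components, and to get a contradiction you would need to assemble nontrivial $p$-element commutators for infinitely many primes $p$ into a single \emph{pronilpotent} subgroup generated by commutators. But this crucial step is precisely what your proposal does not prove. Commutators that are $p$-elements for different primes have no reason to commute, or to generate anything close to a pronilpotent group; ``relocating the construction inside a suitable pronilpotent section'' is not an argument, since the existence of such a section containing all the chosen commutators is exactly the point at issue (and note also that the class $\X$ of the hypothesis is not obviously closed under passing to subgroups or sections: a commutator of $G$ lying in a subgroup $H$ need not be a commutator of $H$, which is why the paper introduces the auxiliary class $\Y$). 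The Nikolov--Segal reduction is also flawed: local finiteness of $G'$ is not detected on topologically finitely generated subgroups of $G$, because an element of $G'$ is only a \emph{limit} of products of commutators and need not lie in $H'$ for any finitely generated $H\leq G$.

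The paper's entire machinery exists to overcome this cross-prime obstruction, and it does so structurally rather than elementwise. In the prosoluble case (Proposition \ref{pi-prosol}) it runs an inductive construction, using Wilson's theorem \ref{wilso-series}, Thompson's coprime fixed-point theorem (Proposition \ref{thompson}) and a Baire category argument, to produce pairwise commuting finite non-abelian subgroups of mutually coprime odd orders; only then does the ``product of infinitely many pieces is a commutator of infinite order'' trick become legitimate, because the pieces sit in distinct direct factors of a Cartesian product. In the general case it invokes Feit--Thompson to handle the case $2\notin\pi(G')$, Wilson's theorem \ref{wil-series} to split off Cartesian products of non-abelian finite simple groups (where the assembly trick works via Ore's conjecture, Lemma \ref{Y-nonab}), and Mann's theorem on central-by-bounded-exponent groups to glue the layers together. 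None of this is circumventable by the soft reductions you propose; your outline is a correct description of the problem, not a proof of the theorem.
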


Recall that a group is periodic (torsion) if every element of the group has finite order. A group is locally finite if each of its finitely generated subgroups is finite. Periodic profinite groups have received a good deal of attention in the past. In particular, using Wilson's reduction theorem \cite{wilson-torsion}, Zelmanov has been able to prove local finiteness of periodic profinite groups \cite{z}. Earlier Herfort showed that there exist only finitely many primes dividing the orders of elements of a periodic profinite group \cite{he}. It is a long-standing problem whether any periodic profinite group has finite exponent.

\section{Preliminary lemmas} We will slightly abuse terminology and use the term ``prosoluble group" to mean a topological group that is isomorphic to an inverse limit of finite soluble groups. 

Let us denote by $\X$ the class of profinite groups with the property that every pronilpotent subgroup generated by commutators is periodic. Moreover, let us denote by $\Y$ the class of profinite groups in which every commutator has finite order and the derived subgroup of every Sylow subgroup is periodic. It is clear that the class $\Y$ is closed under taking subgroups and homomorphic images. We also remark that $\X \subseteq \Y$.

If $G$ is a finite soluble group, $h(G)$ denotes the Fitting height of $G$. Recall that this is the length of the shortest normal (or characteristic) series of $G$ all of whose quotients are nilpotent. If $G$ is a prosoluble group having a finite series of characteristic subgroups all of whose quotients are pronilpotent, we denote the shortest length of such a series by $ph(G)$. If $G$ has no such series, we write $ph(G)=\infty$. Of course $ph(G)<\infty$ if and only if $G$ is an inverse limit of finite soluble groups of bounded Fitting height. 

As usual, if $G$ is a finite group, $\pi(G)$ is the set of primes dividing the order of $G$. For a profinite group $G$ we denote by $\pi(G)$ the (possibly infinite) set $\pi$ of primes such that $G$ is an inverse limit of finite groups of orders divisible by primes in $\pi$ only. In what follows we use the Wilson-Zelmanov Theorem that every periodic profinite group is locally finite without explicit references.

An important role in the present paper will be played by the following two theorems, due to Wilson \cite[Theorems 2 and 3]{wilson-torsion}: 
\begin{thm}\label{wil-series} 
Let $p$ be a prime and $G$ a profinite group whose Sylow $p$-subgroups are periodic. Then $G$ has a finite series of closed
characteristic subgroups in which each factor either is pro-($p$-soluble)
or is isomorphic to a Cartesian product of non-abelian finite simple
groups of order divisible by $p$.
\end{thm}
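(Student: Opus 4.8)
The plan is to reduce the statement to a uniform bound on finite continuous quotients and then pass to the inverse limit. First I would attach to every finite group $H$ a canonical ascending series, built by alternately extracting the $p$-soluble radical and the socle: set $R_0=1$, let $R_{2i+1}/R_{2i}$ be the largest normal $p$-soluble subgroup of $H/R_{2i}$ (this exists and is characteristic, since $p$-solubility is closed under extensions), and let $R_{2i+2}/R_{2i+1}$ be the socle of $H/R_{2i+1}$. A finite group with trivial $p$-soluble radical has no abelian minimal normal subgroup and no minimal normal subgroup of order prime to $p$, because an elementary abelian group and a $p'$-group are both $p$-soluble; hence each socle factor $R_{2i+2}/R_{2i+1}$ is a direct product of non-abelian simple groups of order divisible by $p$, while each factor $R_{2i+1}/R_{2i}$ is $p$-soluble. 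The series is characteristic, strictly ascends over each pair of steps, and reaches $H$. Writing $\ell_p(H)$ for its length, the crux is to bound $\ell_p(G/N)$ independently of the open normal subgroup $N$; granting such a bound, a standard inverse-limit argument assembles the canonical series of the finite quotients into a finite characteristic series of $G$ whose factors are either pro-($p$-soluble) (inverse limits of finite $p$-soluble groups) or Cartesian products of non-abelian simple groups of order divisible by $p$.

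The heart of the matter, and the step I expect to be the main obstacle, is the uniform bound on the number of non-abelian layers. Here the hypothesis that the Sylow $p$-subgroups are periodic must be used in an essential way, and it cannot be replaced by a counting argument on $|P|$, since a Sylow $p$-subgroup of $G$ may well be infinite; one must produce a genuine element of infinite order rather than merely unbounded exponents. I would argue by contradiction: if $\ell_p(G/N)$ were unbounded, a compactness (K\"onig's lemma) argument would produce an infinite tower exhibiting nested non-abelian simple layers of order divisible by $p$. The key point is that such a tower forces a Sylow pro-$p$ subgroup to contain iterated wreath products $P_S\wr\cdots\wr P_S$ of unbounded depth, where $P_S$ is a nontrivial Sylow $p$-subgroup of one of the simple layers. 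Iterated wreath products of a nontrivial finite $p$-group have exponent growing without bound, and their inverse limit contains an element of infinite order (an odometer-type element acting on the rooted $p$-tree). Exhibiting a single genuine element of infinite order in a Sylow pro-$p$ subgroup of $G$ contradicts periodicity, and this rules out infinitely many layers.

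The delicate technical work, which I would carry out with care, is organising the tower so that the growing-exponent $p$-elements are mutually compatible and actually lift to elements of one fixed Sylow pro-$p$ subgroup of $G$, rather than merely appearing in unrelated finite sections. This requires choosing the simple layers and the $p$-elements coherently along the inverse system and exploiting that, for a profinite group, a Sylow $p$-subgroup maps onto a Sylow $p$-subgroup of every continuous finite quotient, so that the orders of $p$-elements do not drop under the projections $G\to G/N$. The remaining ingredients are comparatively routine once this contradiction is in place: the verification that the canonical series is characteristic and behaves well under inverse limits, and the CFSG-based facts about Sylow $p$-subgroups of non-abelian simple groups of order divisible by $p$. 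The finiteness of the series then follows immediately from the uniform bound on $\ell_p$.
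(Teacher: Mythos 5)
You should first note that the paper contains no proof of Theorem \ref{wil-series} at all: it is quoted verbatim from Wilson \cite[Theorems 2 and 3]{wilson-torsion}, so your attempt has to be measured against Wilson's argument. Your skeleton is the right one and does match Wilson's reduction: the canonical series in each finite quotient alternating the largest normal $p$-soluble subgroup with the socle, a uniform bound on the number of semisimple layers, and an inverse-limit assembly; your analysis of the socle factors is also correct. But the step you yourself single out as the heart of the matter is genuinely missing, and the mechanism you sketch for it would not work as described. To contradict periodicity you propose to manufacture an honest element of infinite order as an odometer in an inverse limit of iterated wreath products inside one fixed Sylow pro-$p$ subgroup $P$. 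The obstacle here is not merely ``delicate'': the deep wreath-product configurations live in different finite quotients $G/N_n$, and no compactness or K\"onig-type argument makes them cohere, because the sets $\{x\in P:$ the image of $x$ in $G/N_n$ has order at least $p^n\}$ are not nested --- orders collapse under the projections $G/N_n\to G/N_m$. Your supporting remark that ``the orders of $p$-elements do not drop under the projections $G\to G/N$'' is backwards: orders can only drop under projection; what is true is that $p$-elements of a quotient lift to $p$-elements of $P$ of at least that order. That lifting yields only elements of unbounded \emph{finite} order in $P$, which does not contradict periodicity --- whether a periodic profinite group must have finite exponent is precisely the long-standing open problem recalled in the paper's introduction --- so your plan collapses exactly at the point where you deferred the work.

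The missing idea, which is what Wilson actually uses (and which this paper deploys in the same spirit in Lemma \ref{Nabel}), is Baire's Category Theorem: since $P$ is periodic, it is the countable union of the closed sets $\{x : x^{p^k}=1\}$, so some coset $aU$ of an open subgroup $U\leq P$ consists of elements of uniformly bounded order. After this, one needs only a purely \emph{finite} lemma --- in an iterated wreath product of sufficiently large depth, every coset of a subgroup of small index contains elements of large order --- applied to the images of $aU$ in the finite quotients; unboundedly many semisimple layers then give a contradiction without ever producing an element of infinite order, sidestepping the coherence problem entirely. Two further points you label ``comparatively routine'' deserve flags: first, forcing the wreath-like configurations at the finite level requires Schreier's conjecture (the outer automorphism group of a finite simple group is soluble, hence $p$-soluble, so stacked semisimple layers must permute the simple factors below them nontrivially) --- a CFSG-dependent input --- and even then the wreath products arise as sections rather than subgroups, since the $p$-elements permuting the factors of one layer live in higher layers and must be lifted across the intervening $p$-soluble factors; this is a theorem-sized argument, not an assertion. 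Second, the inverse-limit assembly needs care, because the canonical radical--socle series of the finite quotients are not carried onto one another by the projections (images of socles need not lie in socles), so one must either choose compatible admissible series of bounded length via an inverse limit of nonempty finite sets or define the series directly in $G$.
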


\begin{thm}\label{wilso-series} 
Let $p$ be an odd prime and G a pro-($p$-soluble) group whose Sylow $p$-subgroups are periodic. Then $G$ has a finite series of closed characteristic subgroups in which each factor is either a pro-$p$ group or a pro-$p'$ group.\end{thm}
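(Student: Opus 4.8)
The plan is to reduce the statement to a uniform bound on the $p$-length of the finite continuous quotients of $G$, and then to extract such a bound from Hall--Higman theory together with the periodicity hypothesis. First I would recall that in a pro-($p$-soluble) group the subgroups $O_{p'}(H)$ and $O_p(H)$ are well defined closed characteristic subgroups, realized as the inverse limits of the corresponding subgroups of the finite quotients. Iterating the assignments $H \mapsto O_{p'}(H)$ and $H \mapsto O_p(H)$ produces the (pro-)upper $p$-series $1 \le L_0 \le M_0 \le L_1 \le M_1 \le \cdots$ of $G$, a chain of closed characteristic subgroups whose successive factors are alternately pro-$p'$ and pro-$p$. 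If one can show that there is an integer $n$ with $\ell_p(G/N) \le n$ for every open normal subgroup $N$, then the $n$-th term of this series, being the inverse limit of the $n$-th terms of the upper $p$-series of the groups $G/N$, coincides with $G$; the series is then finite and of exactly the required form. So everything comes down to bounding the $p$-lengths of the finite quotients.

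For the bound I would argue by contradiction, assuming the $p$-lengths are unbounded. Passing to the quotient $\varprojlim_n Q_n$ over a descending chain of open normal subgroups chosen so that $\ell_p(Q_n) \to \infty$ --- and noting that a Sylow $p$-subgroup of a quotient is the image of a Sylow $p$-subgroup of $G$, hence still periodic --- I may assume $G = \varprojlim_n Q_n$ with $Q_{n+1} \twoheadrightarrow Q_n$ and $\ell_p(Q_n) \ge n$. The heart of the matter is the Hall--Higman theorem on the minimal polynomial of a $p$-element acting on a $p'$-module: for $p$ odd it forces, inside a finite $p$-soluble group of $p$-length $m$, a $p$-element whose order grows with $m$ and, crucially, whose successive powers are distributed one per $p$-layer of the upper $p$-series. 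The oddness of $p$ is precisely what licenses this conclusion, since for $p=2$ the relevant Hall--Higman statement has genuine exceptions; the two odd cases $p \ge 5$ and $p = 3$ are then handled by the corresponding $p$-length-versus-exponent estimates.

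Using this, I would construct a \emph{coherent} family of $p$-elements: for each $n$ an element $x_n$ lying in a Sylow $p$-subgroup $S_n$ of $Q_n$, of order at least $p^{f(n)}$ with $f(n) \to \infty$, chosen so that the image of $x_{n+1}$ under $S_{n+1} \twoheadrightarrow S_n$ is a suitable power of $x_n$. The layer-by-layer structure supplied by Hall--Higman is exactly what lets these elements be matched up compatibly as one climbs the upper $p$-series. The resulting inverse-compatible sequence then determines an element of the Sylow $p$-subgroup $S = \varprojlim_n S_n$ of $G$ whose images have unbounded order, hence an element of infinite order, contradicting the periodicity of $S$ and completing the proof.

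The step I expect to be the main obstacle is precisely the coherence of these large-order elements across the inverse system. It is not enough to produce, in each $Q_n$, some element of large order: the exponents of the $S_n$ may well be unbounded without $S$ containing any element of infinite order --- indeed, whether a periodic profinite group must have finite exponent is the open problem mentioned in the introduction. A bare exponent estimate is therefore useless, and one must extract from the Hall--Higman analysis genuinely compatible elements, that is, an actual element of the inverse limit. Organizing the minimal-polynomial information so that the order really does increase by a factor of $p$ at each successive $p$-layer, in a manner respected by the projections $Q_{n+1}\to Q_n$, is the delicate technical core on which the whole argument rests.
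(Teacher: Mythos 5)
You should know at the outset that the paper contains no proof of this statement: it is quoted verbatim from Wilson (Theorem 3 of \cite{wilson-torsion}) as a background tool, so the relevant comparison is with Wilson's argument. Your first paragraph is sound and matches the actual first step: it suffices to bound $\ell_p(G/N)$ uniformly over open normal $N$, since then the pro-upper $p$-series of $G$ terminates and has the required form. The genuine gap is exactly where you place it, and it is not a technicality one can expect to push through: the ``coherent family'' of large-order $p$-elements is never constructed, and the sketch faces real obstructions. First, the Hall--Higman minimal-polynomial data lives relative to the upper $p$-series of each individual quotient $Q_n$, and these series are not compatible with the projections ($O_{p'}(Q_{n+1})$ maps into, but in general not onto, $O_{p'}(Q_n)$), so a ``one power per layer'' configuration in $Q_{n+1}$ need not project to the analogous configuration in $Q_n$. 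Second, even granting it, ``the image of $x_{n+1}$ is a suitable power of $x_n$'' does not define an element of $\varprojlim S_n$; you would still need a compactness/diagonal extraction, and a cluster point of elements of large order can have small order in every fixed quotient --- precisely the collapse you yourself identify. As written, the proposal reduces the theorem to an unproved claim that is arguably harder than the theorem.

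The missing idea --- and the way Wilson avoids ever having to produce an element of infinite order --- is Baire category applied to the periodic Sylow pro-$p$ subgroup $S$: the sets $\{x\in S : x^{p^k}=1\}$ are closed and cover $S$, so some coset $aT$ of an open subgroup $T\le S$ satisfies the coset identity $(at)^{p^k}=1$ for all $t\in T$. This single piece of uniform numerical data passes to every continuous finite quotient with the \emph{same} $k$ and the \emph{same} index $|S:T|=p^m$, and a quantitative Hall--Higman-type lemma for finite $p$-soluble groups (valid for odd $p$) bounds $\ell_p(G/N)$ by a function of $k$ and $m$ alone, uniformly in $N$; your own reduction then finishes the proof. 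In other words, periodicity is converted not into a global exponent bound (which, as you rightly note, is unavailable and related to an open problem) but into a uniform local bound on an open coset, which is all Hall--Higman needs. This manoeuvre even appears in the present paper, in the proof of Lemma \ref{Nabel}. One smaller correction: the role of oddness is as you say (Theorem B on minimal polynomials), but the delicate subcase consists of the Fermat primes $3,5,17,\dots$, not the split ``$p\ge 5$ versus $p=3$''; for odd $p$ one has $\ell_p\le 2e_p$ always, improving to $\ell_p\le e_p$ when $p$ is not Fermat, and either bound suffices here.
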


One immediate corollary of Theorem \ref{wilso-series} is that if $\pi$ is a finite set of odd primes and G a prosoluble group whose Sylow $p$-subgroups are periodic for every $p\in\pi$, then $G$ has a finite series of closed characteristic subgroups in which each factor is either a pro-$\pi$ group or a pro-$\pi'$ group. 

Another corollary is that if $G$ is a prosoluble group for which $\pi(G)$ is finite and all of whose Sylow subgroups are periodic, then $ph(G)$ is finite.

\begin{lem}\label{G'} 
Let $G\in\X$. Then all Sylow $p$-subgroups of $G'$ are periodic.
\end{lem}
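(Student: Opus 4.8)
The plan is to reduce the statement to the hypothesis $G\in\X$ by exhibiting, for each prime $p$, a Sylow $p$-subgroup of $G'$ that is simultaneously pronilpotent and topologically generated by commutators. Fix a Sylow $p$-subgroup $P$ of $G$. Since $G'$ is normal in $G$, the subgroup $P\cap G'$ is a Sylow $p$-subgroup of $G'$, and being a pro-$p$ group it is pronilpotent. As all Sylow $p$-subgroups of $G'$ are conjugate, it suffices to prove that $P\cap G'$ is periodic; and once we know that $P\cap G'$ is topologically generated by commutators, this is immediate from $G\in\X$. Thus the whole proof rests on a profinite version of the Focal Subgroup Theorem: that $P\cap G'$ is topologically generated by the set of commutators of $G$ that lie in $P$.

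To prove this, let $T$ denote the (closed) set of all commutators of $G$ and put $D=\ol{\langle T\cap P\rangle}$, so that clearly $D\subseteq P\cap G'$. For the reverse inclusion I would argue through the finite continuous quotients $\ol G=G/N$, with $N$ ranging over the open normal subgroups of $G$ and images denoted by bars. The image of $P$ is a Sylow $p$-subgroup $\ol P$ of $\ol G$; since images of Sylow subgroups are Sylow subgroups, one checks that the image of $P\cap G'$ equals $\ol P\cap\ol{G'}$. By the classical Focal Subgroup Theorem, $\ol P\cap\ol{G'}$ is generated by the commutators $[\ol x,\ol g]$ with $\ol x\in\ol P$ and $[\ol x,\ol g]\in\ol P$. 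Hence it is enough to show that each such generator is the image of some element of $T\cap P$: this gives that the image of $D$ in $\ol G$ is exactly $\ol P\cap\ol{G'}$, whence $DN=(P\cap G')N$ for every $N$, and intersecting over all $N$ yields $D=P\cap G'$ because both subgroups are closed.

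The crux is therefore the following lifting step, which I expect to be the main obstacle. Given $\ol x\in\ol P$ and $\ol g\in\ol G$ with $[\ol x,\ol g]\in\ol P$, choose $x\in P$ over $\ol x$ and any $g_0\in G$ over $\ol g$. Then $x^{g_0}$ lies in $PN$ and is a $p$-element, being conjugate to $x$. Now $P$ is a Sylow $p$-subgroup of $PN$, and every Sylow $p$-subgroup of $PN$ has the form $P^{n}$ with $n\in N$ (writing the conjugating element as a product of an element of $P$ and an element of $N$). Consequently the $p$-element $x^{g_0}$ is conjugate by a suitable $n\in N$ into $P$, say $(x^{g_0})^{n}\in P$. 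Replacing $g_0$ by $g_0 n$ alters neither the image $[\ol x,\ol g]$ nor the factor $x^{-1}$, and a direct computation gives $[x,g_0 n]=x^{-1}(x^{g_0})^{n}\in P$. Thus $[x,g_0n]\in T\cap P$ maps onto $[\ol x,\ol g]$, completing the lifting. With the profinite Focal Subgroup Theorem in hand, $P\cap G'$ is a pronilpotent subgroup generated by commutators, hence periodic by the definition of $\X$, and the lemma follows.
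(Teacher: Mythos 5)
Your proof is correct and follows essentially the same route as the paper: identify a Sylow $p$-subgroup of $G'$ as $P\cap G'$ for a Sylow $p$-subgroup $P$ of $G$, observe it is pronilpotent, invoke the profinite Focal Subgroup Theorem to see it is topologically generated by commutators, and conclude periodicity from $G\in\X$. The only difference is that the paper simply cites Theorem 4.4 of Gilotti--Ribes--Serena for the profinite Focal Subgroup Theorem, whereas you reprove it from the finite case; your inverse-limit argument (closedness of the set of commutators, equality of the image of $P\cap G'$ with $\ol{P}\cap\ol{G}'$ via Sylow theory, and the lifting of a commutator $[\ol{x},\ol{g}]$ into $T\cap P$ by adjusting $g_0$ with an element of $N$) is sound and makes the lemma self-contained.
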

\begin{proof} If $p$ is a prime, a  Sylow $p$-subgroup of $G'$ is of the form
$P \cap G'$, for some Sylow $p$-subgroup $P$ of $G$. A profinite version of the Focal Subgroup Theorem (see Theorem 4.4 of \cite{gilotti-ribes-serena}) implies that $P\cap G'$ is generated by commutators. Since $G\in\X$, it follows that $P\cap G'$ is periodic.
\end{proof}

\begin{lem}\label{finite-pi} 
Let $G$ be a  profinite group all of whose Sylow subgroups are periodic. If $\pi(G)$ is finite, then $G$ is locally finite.
\end{lem}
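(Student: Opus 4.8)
The plan is to reduce the statement to the Wilson--Zelmanov theorem. Since that theorem asserts that every periodic profinite group is locally finite, it will suffice to prove that, under the given hypotheses, $G$ is periodic; that is, that every element of $G$ has finite order.

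So I would fix an arbitrary $g \in G$ and examine the procyclic closed subgroup $H = \overline{\langle g\rangle}$. Being procyclic, $H$ is abelian and hence pronilpotent, so it decomposes as the Cartesian product $H = \prod_{p} H_p$ of its (unique) Sylow subgroups. Each $H_p$ is a procyclic pro-$p$ group, and thus is isomorphic either to a finite cyclic $p$-group or to $\mathbb{Z}_p$. Now $H_p$ is in particular a pro-$p$ subgroup of $G$, so it is contained in some Sylow $p$-subgroup of $G$, which is periodic by hypothesis; hence $H_p$ is periodic. As $\mathbb{Z}_p$ is not periodic, this forces each $H_p$ to be finite. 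Because $\pi(G)$ is finite, only finitely many of the $H_p$ are nontrivial, and each of these is finite, so $H = \prod_p H_p$ is finite. In particular $g$ has finite order.

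Since $g$ was arbitrary, this shows that $G$ is periodic, and the Wilson--Zelmanov theorem then yields that $G$ is locally finite, as required.

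The structural ingredients here are all routine: the decomposition of a procyclic (indeed any pronilpotent) profinite group as the Cartesian product of its Sylow subgroups, the fact that every pro-$p$ subgroup lies inside a Sylow $p$-subgroup, and the classification of procyclic pro-$p$ groups. The only substantial input is the Wilson--Zelmanov theorem invoked at the last step, so I do not anticipate a genuine obstacle in the argument itself. The hypothesis that $\pi(G)$ is finite is used in exactly one place, namely to guarantee that the product of the finitely many finite Sylow subgroups of $H$ is again finite; without it an element like $(1,1,\dots)$ in a product $\prod_p C_p$ of cyclic groups of prime order would have infinite order despite all Sylow subgroups being periodic, so this hypothesis cannot be dropped.
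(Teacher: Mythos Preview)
Your argument is correct and follows essentially the same route as the paper: reduce to periodicity via Wilson--Zelmanov, take the procyclic subgroup generated by an element, decompose it into its finitely many Sylow subgroups, and observe that each is a periodic procyclic pro-$p$ group and hence finite. The paper's version is terser but identical in content.
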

\begin{proof} It is sufficient to show that $G$ is periodic. Choose $x\in G$ and consider the procyclic subgroup $\langle x\rangle$ generated by $x$. Write $\langle x\rangle=S_1S_2\dots S_n$, where $S_i$ are the Sylow subgroups of $\langle x\rangle$. Since all the subgroups $S_i$ are finite cyclic and since there are only finitely many primes in $\pi(G)$, we conclude that $\langle x\rangle$ is finite. This completes the proof.
\end{proof}

\begin{lem}\label{baba} Let $G$ be a finite soluble group whose Hall $2'$-subgroups are abelian. Then $h(G)\leq 3$.
\end{lem}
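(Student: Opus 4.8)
The plan is to reduce to the case $O_2(G)=1$ and there exploit that the Fitting subgroup of a soluble group contains its own centraliser. First I would record the elementary fact that for a normal subgroup $N\trianglelefteq G$ one has $h(G)\le h(N)+h(G/N)$. Applying this with $N=O_2(G)$, which is nilpotent, reduces the lemma to showing $h(\ol G)\le 2$ for $\ol G:=G/O_2(G)$. Note that $\ol G$ is again soluble with abelian Hall $2'$-subgroups (Hall subgroups pass to quotients) and, crucially, $O_2(\ol G)=1$.

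Now set $A:=O_{2'}(\ol G)$. Since $O_2(\ol G)=1$ we have $F(\ol G)=A$, and since $A$ is a normal subgroup of odd order it lies in some Hall $2'$-subgroup and hence is abelian. Because $C_{\ol G}(F(\ol G))\le F(\ol G)$ in a soluble group and $A$ is abelian, this forces $C_{\ol G}(A)=A$; that is, $A$ is self-centralising. The goal then becomes to show that $\ol G/A$ is a $2$-group, for then $\ol G$ has the nilpotent series $1\le A\le \ol G$ and $h(\ol G)\le 2$.

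The heart of the argument — and the step I expect to be the main obstacle — is to prove that above the first $2$-layer no further odd part survives. Writing $M:=O_{2',2}(\ol G)$, so that $M/A=O_2(\ol G/A)$ is a $2$-group, I would show $O_{2'}(\ol G/M)=1$ as follows. If $Y/M:=O_{2'}(\ol G/M)$ were nontrivial, I would pick a Hall $2'$-subgroup $H^\ast$ of the normal subgroup $Y$ and enlarge it to a Hall $2'$-subgroup $H$ of $\ol G$; then $A=O_{2'}(\ol G)\le H$ and $H$ is abelian, so $[A,H^\ast]\le [H,H]=1$ and therefore $H^\ast\le C_{\ol G}(A)=A\le M$. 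Since $Y/M$ has odd order it is covered by $H^\ast$, giving $Y=H^\ast M=M$ and contradicting $Y/M\ne 1$. This is precisely where the hypothesis that the Hall $2'$-subgroup be abelian is used: it is what pins the Hall $2'$-subgroup of the putative second odd layer inside the self-centralising Fitting subgroup $A$.

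Finally, from $O_{2'}(\ol G/M)=1$ I would deduce $O_2(\ol G/M)=1$ as well: any normal $2$-subgroup of $\ol G/M$ pulls back to a normal $2$-subgroup of $\ol G/A$, being an extension of the $2$-group $M/A$, hence lies in $O_2(\ol G/A)=M/A$ by maximality and so is trivial. Thus $F(\ol G/M)=1$, which for a soluble group forces $\ol G=M$; equivalently $\ol G/A$ is a $2$-group. This yields $h(\ol G)\le 2$ and hence $h(G)\le 3$, as required. All ingredients besides the absorption step are standard (self-centrality of the Fitting subgroup, subadditivity of the Fitting height, and the behaviour of Hall subgroups under quotients and inclusions), so the single delicate point is the absorption of $H^\ast$ into $A$.
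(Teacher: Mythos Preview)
Your proof is correct and follows essentially the same strategy as the paper: reduce modulo $O_2(G)$, identify $F(\ol G)=A$ as an abelian self-centralising $2'$-group, and use abelianness of the Hall $2'$-subgroup to absorb odd stuff into $A$. Your detour through $M=O_{2',2}(\ol G)$ is unnecessary, though: the absorption argument you run for $H^\ast$ works verbatim with the full Hall $2'$-subgroup $H$ in place of $H^\ast$ (since $A\le H$ and $H$ is abelian give $H\le C_{\ol G}(A)=A$, hence $H=A$ is normal and $\ol G/A$ is a $2$-group in one stroke), which is exactly how the paper finishes.
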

\begin{proof} It is sufficient to show that if $G$ has no non-trivial normal 2-subgroups, then a Hall $2'$-subgroup of $G$ is normal. So assume that $G$ has no non-trivial normal 2-subgroups and choose a Hall $2'$-subgroup $H$ in $G$. Let $F$ be the Fitting subgroup of $G$. Then $F$ is a $2'$-subgroup and therefore $F\leq H$. Since $C_G(F)\leq F$ \cite[Theorem 6.1.3]{go} and since $H$ is abelian, we conclude that $F=H$. The lemma follows.
\end{proof}

Let $G$ be a finite group acted on by a finite group $A$ such that $(|G|,|A|)=1$. It is well-known that if $N$ is a normal $A$-invariant subgroup of $G$, then $C_{G/N}(A)=C_G(A)N/N$ (see for example \cite[6.2.2]{go}). A profinite version of this result can be stated as follows. 

Let $G$ be a profinite group on which a finite group $A$ acts by continuous  automorphisms. Suppose that $\pi(G)\cap\pi(A)=\emptyset$. If $N$ is a closed normal $A$-invariant subgroup of $G$, then $C_{G/N}(A)=C_G(A)N/N$. A proof of this can be found in \cite{io98} (see also Lemma 2 in \cite{he} for the case where $|A|$ is a prime).

In his seminal work \cite{th} Thompson showed that if $G$ is a finite soluble group acted coprimely by a finite group $A$, then $h(G)$ is bounded in terms of $h(C_G(A))$ and the number of prime divisors of $|A|$, counting multiplicities. A survey of further results in this direction can be found in Turull \cite{turull}. We will require a profinite version of Thompson's theorem. It can be deduced by the standard inverse limit argument using the above mentioned property 
 that if $N$ is a closed normal $A$-invariant subgroup of $G$, then $C_{G/N}(A)=C_G(A)N/N$.  Therefore we omit the proof.

\begin{prop}\label{thompson} Let $G$ be a  prosoluble group acted on by a finite group $A$ and suppose that $\pi(G)$ and  $\pi(A)$ are disjoint. If $C_G(A)$ has a finite normal series with pronilpotent quotients, then also $G$ has such a series.
\end{prop}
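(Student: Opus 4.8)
The plan is to reduce the statement to the finite soluble quotients of $G$, to apply the finite version of Thompson's theorem to each of them with a bound that does not depend on the quotient, and then to reassemble the conclusion through an inverse limit. Write $C=C_G(A)$ and let $1=C_0\leq C_1\leq\dots\leq C_k=C$ be a finite normal series with pronilpotent quotients, as provided by the hypothesis. Since $A$ is finite, the open normal $A$-invariant subgroups of $G$ form a base of neighbourhoods of the identity: given any open normal subgroup $M$, the intersection $\bigcap_{a\in A}M^a$ is again open and normal, and now $A$-invariant. Hence $G=\varprojlim_N G/N$, where $N$ runs over the open normal $A$-invariant subgroups of $G$, and each $G/N$ is a finite soluble group on which $A$ acts.

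For a fixed such $N$ the action of $A$ on $G/N$ is coprime: indeed $\pi(G/N)\subseteq\pi(G)$ is disjoint from $\pi(A)$, so $(|G/N|,|A|)=1$. The profinite coprime centralizer formula quoted before the proposition therefore applies and gives $C_{G/N}(A)=CN/N$. Thus $C_{G/N}(A)$ is a homomorphic image of $C$, and the images of the subgroups $C_i$ form a normal series of $C_{G/N}(A)$ of length at most $k$ whose factors, being finite quotients of the pronilpotent groups $C_{i+1}/C_i$, are nilpotent. Consequently $h(C_{G/N}(A))\leq k$. Now the finite version of Thompson's theorem bounds $h(G/N)$ in terms of $h(C_{G/N}(A))$ and the number $\ell$ of prime divisors of $|A|$ counted with multiplicity; since $k$ and $\ell$ do not depend on $N$, we obtain a single integer $m$ with $h(G/N)\leq m$ for every $N$.

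It follows that $G$ is an inverse limit of finite soluble groups of Fitting height at most $m$. As noted when $ph$ was defined, this is equivalent to $ph(G)<\infty$; in fact $ph(G)\leq m$. Hence $G$ possesses a finite characteristic, and in particular normal, series with pronilpotent quotients, which is the desired conclusion. The one point that requires care, and which is the crux of the argument, is the uniformity of the bound $m$ across all the finite quotients $G/N$: this is precisely what the centralizer identity $C_{G/N}(A)=CN/N$ secures, since it transfers the single bound $k$ on the Fitting height of the finite images of $C_G(A)$ to all the relevant centralizers at once, allowing Thompson's bounding function to be applied with fixed arguments.
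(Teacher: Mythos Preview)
Your proof is correct and follows exactly the route the paper indicates: the paper omits the proof entirely, stating only that it ``can be deduced by the standard inverse limit argument using the above mentioned property that if $N$ is a closed normal $A$-invariant subgroup of $G$, then $C_{G/N}(A)=C_G(A)N/N$.'' You have spelled out precisely this argument---reducing to the $A$-invariant finite soluble quotients, transferring the bound on the Fitting height via the coprime centralizer identity, applying the finite Thompson theorem uniformly, and reassembling via the equivalence between bounded Fitting height in all finite quotients and finite $ph(G)$.
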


\begin{lem}\label{ab}
Let $G$ be a non-abelian prosoluble group all of whose Sylow subgroups are periodic.
Then  there exists a subgroup of $G$ which is non-abelian and finite. 
\end{lem}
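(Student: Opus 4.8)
The plan is to split the argument according to whether $G$ is pronilpotent. The unifying observation is that, by Lemma \ref{finite-pi}, any closed subgroup $H\leq G$ with $\pi(H)$ finite is locally finite; hence if such an $H$ is moreover \emph{non-abelian}, then it contains two non-commuting elements $u,v$, and $\langle u,v\rangle$ is a finite non-abelian subgroup. So it suffices to exhibit a non-abelian closed subgroup $H$ of $G$ for which $\pi(H)$ is finite.

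First I would dispose of the case in which $G$ is pronilpotent. Then $G$ is the Cartesian product of its Sylow subgroups, and since $G$ is non-abelian at least one Sylow subgroup $P$ must be non-abelian. As $\pi(P)=\{p\}$ is finite and $P$ is periodic, Lemma \ref{finite-pi} shows $P$ is locally finite, so two non-commuting elements of $P$ generate the desired finite non-abelian subgroup.

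Now suppose $G$ is not pronilpotent. Then some finite continuous quotient $\bar G=G/N$ is non-nilpotent, and it is soluble because $G$ is prosoluble. Choosing in $\bar G$ a subgroup minimal with respect to being non-nilpotent, Schmidt's classical theorem on minimal non-nilpotent groups gives a subgroup $S\leq\bar G$ that is a $\{p,q\}$-group for two distinct primes $p,q$; being non-nilpotent on only two primes, $S$ is non-abelian. Since $\bar G$ is finite and soluble, $S$ is contained in a Hall $\{p,q\}$-subgroup $\bar H$ of $\bar G$, so $\bar H$ is non-abelian. I would then lift this to $G$: as $G$ is prosoluble it has Hall $\{p,q\}$-subgroups, all of which are conjugate. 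Let $H$ be one of them. Its image in $\bar G$ is a Hall $\{p,q\}$-subgroup of $\bar G$, hence conjugate to $\bar H$ and therefore non-abelian, which forces $H$ itself to be non-abelian. Since $\pi(H)\subseteq\{p,q\}$ is finite and the Sylow subgroups of $H$ (being subgroups of Sylow subgroups of $G$) are periodic, Lemma \ref{finite-pi} shows $H$ is locally finite, and two non-commuting elements of $H$ again generate a finite non-abelian subgroup.

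The main obstacle is exactly this last passage. Unlike in a finite group, two non-commuting elements of $G$ need not generate a finite subgroup, since procyclic subgroups can be infinite when infinitely many primes are involved; this is why one cannot simply imitate the finite case. The device that overcomes it is to confine the non-abelian behaviour to a Hall subgroup for a two-element set of primes, where Lemma \ref{finite-pi} supplies local finiteness. The technical point requiring care is that the non-abelianness detected in the finite quotient $\bar G$ genuinely survives this confinement, which I would justify through the existence and conjugacy of Hall $\pi$-subgroups in prosoluble groups and the fact that their images are Hall $\pi$-subgroups of the finite quotients.
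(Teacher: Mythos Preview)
Your proof is correct and follows essentially the same route as the paper: locate a non-abelian Hall $\{p,q\}$-subgroup for two (not necessarily distinct) primes and then invoke Lemma~\ref{finite-pi} to obtain local finiteness, whence two non-commuting elements generate a finite non-abelian subgroup. The paper simply asserts that such primes $p,q$ exist, while you supply a justification via the pronilpotent/non-pronilpotent dichotomy and Schmidt's theorem; this added detail is sound, though somewhat more machinery than the paper invokes.
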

\begin{proof} Since $G$ is non-abelian, we can choose two not necessarily distinct primes $p$ and $q$ such that a Hall $\{p,q\}$-subgroup $M$ is  non-abelian. Since by Lemma \ref{finite-pi} $M$ is locally finite, the result follows.
\end{proof}

\section{Proof of the main theorem}
\begin{prop}\label{pi-prosol} 
Let $G$ be a prosoluble group all of whose Sylow subgroups are periodic and assume that every commutator in $G$ has finite order. Then $ph(G)$ is finite. 
\end{prop}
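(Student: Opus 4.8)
The plan is to reduce the statement to the single assertion that $\pi(G')$ is finite, and then to finish by Wilson's structure theory. First I would set up the dictionary between orders, supports and sets of primes provided by Lemma \ref{finite-pi}: since every Sylow subgroup of $G$ is periodic, a procyclic subgroup $\overline{\langle g\rangle}$ is finite exactly when the set of primes dividing its order is finite, so an element of $G$ has finite order if and only if it ``involves'' only finitely many primes. Thus the hypothesis that every commutator has finite order says precisely that each commutator is a $\pi$-element for some finite set $\pi$ of primes depending on it.

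Granting that $\pi(G')$ is finite, the proof concludes at once. The Sylow subgroups of $G'$ are contained in those of $G$ and are therefore periodic, and $G'$ is again prosoluble; hence the second corollary of Theorem \ref{wilso-series} (finitely many primes together with periodic Sylow subgroups forces finite pronilpotent height) gives $ph(G')<\infty$. Since $G/G'$ is abelian, hence pronilpotent, and since the characteristic series of $G'$ so obtained consists of subgroups characteristic in $G'$ and therefore normal in $G$, we may append $G/G'$ and conclude $ph(G)\le ph(G')+1<\infty$.

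The heart of the matter is therefore to prove that $\pi(G')$ is finite, and I would argue by contradiction, aiming to produce a single commutator of infinite order. If $\pi(G')$ is infinite, pick distinct primes $p_1,p_2,\dots\in\pi(G')$. For each such $p$ the Sylow $p$-subgroup of $G'$ is nontrivial, and by the profinite Focal Subgroup Theorem used in the proof of Lemma \ref{G'} it is generated by commutators; hence there is a commutator $c_p$ that is a nontrivial $p$-element, and by Lemma \ref{ab} the witnessing pair may be sought inside a finite subgroup. The decisive step is to amalgamate these local contributions, via the compactness of $G\times G$ and the continuity of $(x,y)\mapsto[x,y]$, into one pair $(x,y)$ whose commutator has order divisible by infinitely many primes, hence (by the dictionary above) infinite order. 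I expect this amalgamation to be the main obstacle: a naive limit of the $c_{p_i}$ merely tends to $1$ as $p_i\to\infty$, so one must instead realise many primes \emph{simultaneously} in a single commutator at each finite stage and do so compatibly along the inverse system. The conditions ``$p$ divides the order of $[x,y]$'' are not visibly closed in $G\times G$, so the difficulty is to arrange both that finitely many of them are realised by one commutator and that the relevant sets enjoy the finite intersection property, allowing compactness to deliver a commutator realising all of them at once.

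Should the direct compactness construction prove recalcitrant, an alternative is to first show that the commutators have \emph{bounded} order: the set of commutators is compact and is the increasing union of the closed sets $\{c:c^{n}=1\}$, so a Baire category argument produces a uniform exponent on a nonempty open piece, which one would then try to propagate over the whole set using the group structure and the prime-by-prime organisation of $G$ afforded by Theorems \ref{wil-series} and \ref{wilso-series}. A uniform bound $c^{e}=1$ for all commutators would make $G'$ a pro-$\pi(e)$ group outright and hence give $\pi(G')$ finite for free; here again the passage from a local Baire bound to a global one is the delicate point.
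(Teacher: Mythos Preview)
Your reduction is sound: if $\pi(G')$ were finite then $ph(G')<\infty$ by the second corollary of Theorem~\ref{wilso-series}, and since characteristic subgroups of $G'$ are characteristic in $G$, this yields $ph(G)<\infty$. The claim that $\pi(G')$ is finite is even true (it follows \emph{a posteriori} from Propositions~\ref{pi-prosol} and~\ref{prosol+h=>lc} together with Herfort's theorem). But you have not proved it, and both of your suggested routes have the obstacles you yourself flag. The compactness amalgamation fails for exactly the reason you note: the condition ``$p\mid|[x,y]|$'' is open, not closed, so compactness gives nothing for an infinite intersection, and you have no mechanism to realise even finitely many prescribed primes in a \emph{single} commutator. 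The Baire alternative gives a bound only on an open piece of the commutator set, and you offer no way to propagate it globally. So the proposal stops precisely at the hard step.

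The paper's proof is entirely different and does not pass through $\pi(G')$. Assuming $ph(G)=\infty$, it runs an inductive construction: at each stage one has a finite $\pi$-group $\overline A$ (a product of previously built pairwise commuting finite non-abelian groups of coprime odd orders) acting coprimely on a pro-$\pi'$ section $T$ with $ph(T)=\infty$; Thompson's theorem (Proposition~\ref{thompson}) forces $ph(C_T(\overline A))=\infty$, Lemma~\ref{baba} then guarantees that Hall $2'$-subgroups of $C_T(\overline A)$ are non-abelian, and Lemma~\ref{ab} extracts a new finite non-abelian odd-order subgroup $A_n$ centralised by the previous ones. The outcome is a quotient of $G$ containing a Cartesian product of finite non-abelian groups of pairwise coprime orders, which visibly contains a commutator of infinite order. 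The key ingredient you are missing is this use of coprime action and Thompson's fixed-point bound to manufacture \emph{commuting} non-abelian pieces at disjoint sets of primes; that is what makes the amalgamation into one commutator possible, and it is absent from your outline.
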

\begin{proof} Assume that $ph(G)$ is infinite. In what follows we will inductively construct a sequence of  subgroups $A_1, A_2, \ldots $, 
 an increasing chain of normal closed subgroups $$1=R_1 \le R_2 \le \ldots,$$ and a decreasing chain of normal open subgroups $$G=H_1 \ge H_2 \ge \ldots\ge H=\cap H_i $$ such that
\begin{enumerate} 
\item there exist mutually disjoint sets of  odd primes $\pi_i$ with the property that the image of $A_i$ in $G/R_i$  is a finite non-abelian $\pi_i$-group for every $i$; 
\item $[A_i,A_j]\le R_j$ whenever $i<j$;
\item $ph(G/R_i)=\infty$ for every $i$;
\item $A_i\cap H_{i+1}=R_i$.
\end{enumerate}
Later we will see that the above properties lead to a contradiction.

Since $ph(G)$ is infinite, Lemma \ref{baba} implies that the Hall $2'$-subgroups of $G$ are non-abelian. 
By Lemma \ref{ab} we can find
 a non-abelian finite subgroup 
$A_1$ of odd order. Let $\pi_1=\pi(A_1)$ and set $R_1=1$. Next, choose a normal open subgroup $H_2$ in $G$ 
such that $A_1\cap H_2=1$. 
 As $\pi_1$ is a set of odd primes it follows from Theorem \ref{wilso-series} 
 that $H_2$ has a finite series of closed characteristic 
subgroups $\{N_{i}\}$ in which each quotient $ N_{i}/ N_{i+1}$ is either a pro-$\pi_1$ or a pro-$\pi_1'$ group. 
Since $\pi_1$ is finite, $ph(N_{i}/ N_{i+1})$ is finite whenever the quotient $N_{i}/ N_{i+1}$ is pro-$\pi_1$. 
Since $ph(G)=\infty$, we have $ph(H_2)=\infty$. Therefore there exists a  pro-$\pi_1'$ quotient $N_{i}/N_{i+1}$ 
such that $ph(N_{i}/N_{i+1})=\infty$. Set $R_2=N_{i+1}$. We will now pass to the quotient $G/R_2$.

The finite $\pi_1$-group $A_1$ acts in the natural way on the $\pi_1'$-group 
$N_i/R_2$. By Proposition \ref{thompson} $ph(C_{N_i/R_2}(A_1))=\infty$. Lemma \ref{baba} shows that Hall 
$2'$-subgroups in $C_{N_i/R_2}(A_1)$ are non-abelian. Thus, by Lemma \ref{ab} there exists a subgroup $A_2$ such that 
  $A_2/R_2$ is a finite   non-abelian subgroup  of odd order in  $C_{N_i/R_2}(A_1)$. 
Set  $\pi_2=\pi(A_2/R_2)$.
 We note that $\pi_1\cap\pi_2=\emptyset$ and $[A_1,A_2]\leq R_2$.

We now repeat the above arguments in the following way. Assume by induction that
 the subgroups $A_1, \ldots , A_{n-1}$ 
 and the  normal subgroups $R_1,\ldots,R_{n-1}$ and 
$H_1,\ldots ,H_{n-1}$ with the prescribed properties have been found. For brevity we indicate with an 
overline the elements (or subgroups) modulo the subgroup $R_{n-1}$. Then the $\ol A_1, \ldots ,\ol A_{n-1}$ 
are pairwise commuting finite subgroups of mutually coprime orders. In particular, 
$\ol A=\ol A_1 \cdots \ol A_{n-1}$ is a finite $\pi$-group, where $\pi=\cup_{i=1}^{n-1}\pi_i$. 
Let $J$ be a normal open subgroup of $G$ with the properties that $R_{n-1} \le J$ and  $\ol J \cap \ol A=1$. 
Let  $H_n=H_{n-1}\cap J$.  Note that $\pi$ consists of odd primes, so by  
Theorem \ref{wilso-series} the subgroup $\ol H_n$ has a finite series of closed characteristic subgroups $\{\ol N_{i}\}$ in which each quotient $\ol N_{i}/\ol N_{i+1}$ is either a pro-$\pi$ or a pro-$\pi'$ group. Since $\pi$ is finite, $ph(\ol N_{i}/\ol N_{i+1})$ is finite for each pro-$\pi$ quotient $\ol N_{i}/\ol N_{i+1}$. Taking into account that $ph(\ol G)= \infty$, we remark that also $ph(\ol H_n)= \infty$. Therefore there exists a pro-$\pi'$ quotient $T_n=\ol N_{i}/\ol N_{i+1}$ for some index $i$, such that $ph(T_n)=\infty$.
 Let  
 $R_n/R_{n-1}=\ol N_{i+1}$. 

The finite $\pi$-group $\ol A$ naturally acts on the $\pi'$-group $T_n$. By Proposition \ref{thompson},  
$ph(C_{T_n}(\ol A))=\infty$. Lemma \ref{baba} shows that Hall $2'$-subgroups in $C_{T_n}(\ol A)$ are non-abelian.
We therefore can use Lemma \ref{ab} and find a subgroup $A_n$ such that 
 \begin{enumerate} 
\item[(a)]  $A_n/R_n \le C_{T_n}(\ol A)$ 
\item[(b)] $A_n/R_n$ is a finite non-abelian $\pi'$-group of odd order.
\end{enumerate}

 It is now easy to see that the required properties are satisfied by
$A_1,\dots,A_n$, $R_1,\dots,R_n$ and $H_1,\dots,H_n$. This completes the inductive step. Hence we now assume 
that the non-abelian finite subgroups $A_i$, an increasing chain of normal closed subgroups 
$1=R_1 \le R_2 \le \ldots$, and a decreasing chain of normal open subgroups $$G=H_1 \ge H_2 \ge \ldots\ge H=\cap H_i$$
 with  the prescribed properties have been found.

Remark that $H$ contains $\cup_{n=1}^{\infty} R_n$. Since $H$ is closed, we can consider the quotient $G/H$ of 
$G$. The image in $G/H$ of the closed subgroup generated by all $A_i$ is isomorphic with the 
Cartesian product of finite non-abelian groups of mutually coprime orders. Obviously this group contains a 
commutator of infinite order. This yields a contradiction since all commutators in $G$ have finite order. 
The proof is now complete.
 \end{proof}

\begin{lem}\label{Nabel}
Let $N$ be an  abelian normal subgroup of a group $G \in\Y$.
 Then there exists an open normal subgroup $H$ of $G$ 
such that $[N,H]$ is \lc. 
\end{lem}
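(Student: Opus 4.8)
The plan is to exploit the hypothesis $G\in\Y$ through its single most useful consequence for an abelian $N$. Since $N$ is abelian and normal, $G$ acts on $N$ by conjugation, and for a fixed $g\in G$ the set $[N,g]=\{[x,g]:x\in N\}$ is already a closed subgroup of $N$: it is the image of the continuous homomorphism $\theta_g\colon x\mapsto x^{-1}x^{g}$ of the abelian group $N$ into itself (image of a continuous endomorphism of a compact abelian group, hence a closed subgroup). Consequently every element of $[N,g]$ is literally a commutator, and since in $\Y$ every commutator has finite order, each $[N,g]$ is a periodic profinite abelian group.

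First I would record that a periodic profinite abelian group $A$ automatically has finite exponent. This follows from the Baire category theorem: writing $A=\bigcup_{n\ge 1}A[n]$ with $A[n]=\{a\in A:a^n=1\}$ closed, some $A[n]$ has nonempty interior, hence is open of finite index; then $A^m\le A[n]$ for the exponent $m$ of the finite group $A/A[n]$, so $A^{mn}=1$. Applying this to each $[N,g]$ shows that every $[N,g]$ has finite exponent.

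The crux is to upgrade this pointwise information to a uniform bound on an open subgroup, and this is exactly where a naive argument breaks down: a closed subgroup topologically generated by periodic pieces spread over infinitely many primes (think of $\prod_p \mathbb{Z}/p$) need not itself be periodic. To obtain uniformity I would run Baire category a second time, now on $G$. Set $D_n=\{g\in G:[N,g]\le N[n]\}=\bigcap_{x\in N}\{g:[x,g]^n=1\}$; each $D_n$ is closed by continuity of the action, and $G=\bigcup_n D_n$ precisely because every $[N,g]$ has finite exponent. As $G$ is compact Hausdorff, hence a Baire space, some $D_n$ has nonempty interior, so it contains a coset $g_0 W$ with $W$ an open normal subgroup of $G$ (using that $G$ has a base of open normal subgroups at the identity); note that $g_0\in D_n$ as well.

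Finally I would transfer the bound from the coset to $W$ via the identity $[x,g_0 w]=[x,w]\,[x,g_0]^{w}$, which gives $[x,w]=[x,g_0 w]\,([x,g_0]^{w})^{-1}$. Both $[x,g_0 w]$ and $[x,g_0]$ lie in $N[n]$ (since $g_0 w,\,g_0\in D_n$), and $N[n]$ is a characteristic subgroup of $N$ preserved by conjugation by $w$, so $[x,g_0]^{w}\in N[n]$ too; hence $[x,w]\in N[n]$ for every $x\in N$ and every $w\in W$. Therefore $[N,W]\le N[n]$, so $[N,W]$ is an abelian group of finite exponent dividing $n$, and such a group is locally finite. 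Taking $H=W$ finishes the proof. The main obstacle is the middle step — converting the element-by-element periodicity of the $[N,g]$ into one exponent valid on an open subgroup — which the double use of Baire category together with the commutator identity resolves.
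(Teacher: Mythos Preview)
Your proof is correct and follows essentially the same route as the paper's: both observe that each $[N,g]$ consists of commutators and hence is a periodic abelian profinite group of finite exponent, then apply Baire's category theorem to the closed sets $\{g\in G:[N,g]^n=1\}$ to obtain a coset $g_0W$ on which the exponent is uniform, and finally transfer this bound from the coset to $W$. The only cosmetic difference is in that last step: the paper factors out the normal closure $[N,g_0]^G$ so that $[x,g_0h]=[x,h]$, whereas you use the identity $[x,g_0w]=[x,w][x,g_0]^{w}$ directly together with the normality of $N[n]$ in $G$.
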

\begin{proof}
Let $a$ be an element of $G$. As $N$ is abelian, the subgroup 
$[N,a]$ coincides with the set of commutators $\{[n,a]\,|n\, \in N\}$, and therefore it is a 
(closed) periodic group. In particular $[N,a]$ has finite exponent 
(see e.g. \cite[Lemma 4.3.7]{ribes-zal}). 

For every integer $m$ consider 
\[ S_m = \{ a\in G \mid [N,a]^m=1 \}. \] Note that the sets $ S_m$ are closed and they cover $G$. By Baire's Category Theorem \cite[p.\ 200]{ke} there exist an integer $m$, an open normal subgroup $H\leq G$ and an element $a\in G$ such that 
\[ [N,aH]^m=1. \]
Since $[N,a]^G \le N$ is abelian and  generated by elements of orders dividing $m$, it has exponent dividing $m$. In particular $[N,a]^G$ is \lc. We can now pass to the quotient $G/[N,a]^G$. Without loss of generality we  assume that $[N,a]=1$. For every $n\in N$ and $h\in H$ we have $[n,ah]=[n,h]$. Therefore $[N,H]$ is generated by elements of orders dividing $m$. Thus $[N,H]$, being abelian, has exponent dividing $m$, and so it is \lc, as required. 
\end{proof}

\begin{lem}\label{abelian}
Assume that $B$ is a profinite group in which every commutator has finite order, and let $A$ be an open normal abelian subgroup of $B$. Then $[A,B]$ is locally finite.
\end{lem}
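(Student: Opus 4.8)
The plan is to first reduce the statement to a claim about a single exponent. Since $A$ is normal in $B$, every commutator $[a,b]$ with $a\in A$, $b\in B$ satisfies $[a,b]=a^{-1}a^b\in A$, so $[A,B]\le A$ and in particular $[A,B]$ is abelian. An abelian profinite group is locally finite exactly when it is periodic, and an abelian periodic profinite group has finite exponent (as in \cite[Lemma 4.3.7]{ribes-zal}). Hence it suffices to produce one integer $M$ with $[A,B]^M=1$; and since $[A,B]$ is abelian and topologically generated by the commutators $[a,b]$, it is enough to bound the orders of all of these commutators by one fixed $M$.

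Next I would establish uniformity on a large open subgroup. For a fixed $b$ the set $[A,b]=\{[a,b]\mid a\in A\}$ is a closed abelian subgroup, because the map $a\mapsto[a,b]$ is a homomorphism when $A$ is abelian; it consists of commutators and is therefore periodic of some finite exponent. The difficulty is that this exponent could a priori grow with $b$. To control it I would run the Baire category argument from the proof of Lemma \ref{Nabel}: the sets $S_m=\{b\in B\mid [A,b]^m=1\}$ are closed and cover $B$, so some $S_m$ contains a coset $b_0H$ with $H$ open normal in $B$; modding out the subgroup $[A,b_0]^B$ (which has exponent dividing $m$ and is hence \lc) and using the identity $[a,b_0h]=[a,h]\,[a,b_0]^h$, one obtains that $[A,H]$ has finite exponent, say $m$. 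I stress that this argument invokes only the finiteness of orders of commutators, not the full hypothesis $B\in\Y$, so it applies here verbatim.

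Finally I would globalize using that $H$ has finite index in $B$. Choose coset representatives $b_1,\dots,b_n$ for $H$; each $[A,b_i]$ is abelian and periodic, of some finite exponent $m_i$. For arbitrary $b\in B$ write $b=b_ih$ with $h\in H$ and apply $[a,b_ih]=[a,h]\,[a,b_i]^h$. Both factors lie in the abelian group $A$: the first has order dividing $m$, the second, being a conjugate of an element of $[A,b_i]$, has order dividing $m_i$. Therefore $[a,b]$ has order dividing $M:=\mathrm{lcm}(m,m_1,\dots,m_n)$, uniformly in $a$ and $b$. Thus $[A,B]^M=1$, so $[A,B]$ is an abelian group of finite exponent and hence locally finite.

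I expect the middle step to be the main obstacle: the pointwise finiteness of the order of each commutator carries no uniformity across $B$, and the Baire category theorem is precisely what upgrades it to a uniform bound on a full open subgroup $H$, after which the finiteness of $[B:H]$ finishes the job. A minor point to be careful about is that Lemma \ref{Nabel} is stated for $B\in\Y$, so one should note, as above, that its proof uses only that commutators have finite order.
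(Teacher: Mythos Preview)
Your argument is correct, but the Baire category step is an unnecessary detour. The paper's proof is shorter because it exploits directly that $A$ itself is open in $B$: taking coset representatives $b_1,\dots,b_k$ of $A$ in $B$, one has $[A,B]=\prod_{i=1}^k[A,b_i]$ (since $A$ is abelian, $[a,a'b_i]=[a,b_i]$ for $a,a'\in A$), and each $[A,b_i]$ is abelian periodic hence of finite exponent, so the product is locally finite. In your language, you could simply take $H=A$, for which $[A,H]=1$ trivially, and skip the whole middle paragraph. Your careful remark that Lemma~\ref{Nabel} uses only finiteness of commutator orders is valid but becomes moot once you notice this shortcut.
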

\begin{proof}
Let $b_1, \dots, b_k$ be a set of representatives of the left cosets of $A$ in $B$. Then
\[[A, B] = \prod_{i=1}^{k} [A,b_i].\]
 By the same argument as in Lemma \ref{Nabel} each $[A,b_i]$ is locally finite, thus $[A, B] $ is 
also \lc.
\end{proof}

\begin{prop}\label{prosol+h=>lc}
Let $G \in \Y$ be prosoluble and assume that $ph(G)$ finite. Then $G'$ is \lc.  
\end{prop}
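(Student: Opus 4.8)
The plan is to argue by induction on $ph(G)$. If $ph(G)\le 1$ then $G$ is pronilpotent, so $G=\prod_p G_p$ is the Cartesian product of its Sylow subgroups and $G'=\prod_p G_p'$. Since every commutator of $G$ has finite order, only finitely many $G_p$ can be non-abelian: otherwise, choosing a non-trivial commutator $[x_p,y_p]\in G_p'$ for infinitely many $p$ and assembling $x=(x_p)$, $y=(y_p)$ would produce a single commutator $[x,y]=([x_p,y_p])_p$ of infinite order. Hence $G'$ is a product of finitely many of the $G_p'$, each periodic because $G\in\Y$; thus $G'$ is periodic and, by Wilson--Zelmanov, \lc.

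For the inductive step let $N$ be the pronilpotent radical of $G$, so that $ph(G/N)=ph(G)-1$. By induction $(G/N)'=G'N/N$ is \lc, hence periodic; as a profinite group that is periodic-by-periodic is again periodic (an element whose image has finite order has a power lying in a periodic subgroup), it suffices to prove that $G'\cap N$ is periodic. I first reduce to $N$ abelian: the characteristic subgroup $N'$ is periodic by the base case applied to the pronilpotent group $N\in\Y$, and replacing $G$ by $G/N'\in\Y$ leaves $ph(G/N)$ unchanged, so I may assume $N$ abelian. Applying Lemma~\ref{Nabel} to $N$ gives an open normal $H$ with $[N,H]$ \lc; by Herfort's theorem \cite{he} a \lc\ profinite group involves only finitely many primes, so $[N,H]$ has finite support, and combining it with the finitely many periodic subgroups $[N,g_i]$ ($g_i$ coset representatives of $H$ in $G$), as in Lemma~\ref{abelian}, shows that $[N,G]$ is periodic. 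Factoring out the normal periodic subgroup $[N,G]$ reduces me to the case where $N$ is central in $G$.

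It remains to show that $\ol{G}'\cap \ol{N}$ is periodic when $\ol{N}\le Z(\ol{G})$. Writing this central abelian group as $\prod_p D_p$, a transfer argument carried out in the finite continuous quotients of $\ol G$ (the transfer $\ol{G}\to \ol{P}/\ol{P}'$ into an abelianised Sylow $p$-subgroup kills $\ol{G}'$, while on a central $p$-element it is the $[\ol{G}:\ol{P}]$-th power map) shows $D_p\le \ol{P}'$ for every Sylow $p$-subgroup $\ol{P}$; since $\ol{G}\in\Y$, each $\ol{P}'$ is periodic, so each $D_p$ has finite exponent. Thus $\ol{G}'\cap\ol{N}$ is periodic exactly when its support $\{p: D_p\ne 1\}$ is finite, and this is the point I expect to be the main obstacle. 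Because $\pi((\ol{G}/\ol{N})')$ is finite by Herfort, an infinite support would give, for infinitely many primes $p$, a non-abelian Sylow $\ol{P}_p$ whose non-trivial commutators map trivially into $(\ol{G}/\ol{N})'$ and hence are central of $p$-power order; one must turn this into a contradiction with $\Y$. I would attack it by a compactness argument over finite sets of primes $T$: working inside a Hall $T$-subgroup and invoking the coprime/Fitting-height machinery of Proposition~\ref{thompson}, one seeks for each $T$ a single commutator whose order is divisible by every $p\in T$, and a convergent subnet of the resulting witnesses in the compact space $\ol{G}\times\ol{G}$ then yields a commutator of infinite order, contradicting $G\in\Y$. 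Making this combination step rigorous --- equivalently, proving directly that $\ol{G}'\cap Z(\ol{G})$ is periodic --- is where the hypothesis $ph(G)<\infty$ must genuinely be used, and is the crux of the argument.
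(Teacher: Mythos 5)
Your reductions are sound up to the last step: the base case, factoring out $N'$, and the proof that $[N,G]$ is \lc\ (Lemma \ref{Nabel} applied to an open normal $H$, together with the coset decomposition $[N,G]=[N,H]\prod_i[N,g_i]$ inside the abelian group $N$) are all correct. But the argument then stops exactly where the real difficulty sits: you never prove that $\ol G'\cap \ol N$ is periodic when $\ol N$ is central, and the sketch you offer would not close this. The transfer argument only yields that each primary component of $\ol G'\cap\ol N$ has finite exponent; the genuine obstacle, as you yourself note, is ruling out infinitely many primes. Your compactness idea --- find, for each finite set $T$ of primes, a commutator of order divisible by every $p\in T$, then pass to a convergent subnet of the witnesses $(x_T,y_T)$ --- fails because the order function is not continuous on a profinite group: the limit commutator $[x,y]=\lim[x_T,y_T]$ may perfectly well be trivial or of small finite order, so no commutator of infinite order is produced. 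This is precisely the difficulty the paper confronts in Proposition \ref{pi-prosol}, and it is overcome there only by a delicate construction (via Theorem \ref{wilso-series} and Proposition \ref{thompson}) of pairwise commuting finite non-abelian subgroups of mutually coprime orders, all surviving in a single quotient $G/H$; only then does the identity $\left[\prod x_i,\prod y_i\right]=\prod[x_i,y_i]$ exhibit an infinite-order commutator. Your sketch has no mechanism forcing that kind of coherence among the witnesses, so the crux remains unproved.

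The irony is that the gap is self-inflicted: the hard statement you reduced to is never needed. The loss occurs when you induct on $G/N$, which throws away what centralizing $N$ buys you. Take $N$ to be the bottom term of a characteristic series $1\le F_1\le\dots\le F_h=G$ with pronilpotent quotients (this also spares you justifying the existence of a pronilpotent radical). After factoring out the \lc\ normal subgroups $N'$ and $[N,G]$, the image of $N$ is central in the image of $G$; hence the image of $F_2$ is central-by-pronilpotent, therefore pronilpotent, so the image of $G$ admits a series of length $h-1$ and the inductive hypothesis applies to it directly --- no statement about $G'\cap Z(G)$ is required. Since the subgroup you factored out is \lc\ and an extension of a \lc\ group by a \lc\ group is \lc, it follows that $G'$ itself is \lc. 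This collapsing of the series is essentially the paper's own inductive step: there $N$ is made central only in an open normal subgroup $H$ of the second term $K$ of the series, whence $H$ is pronilpotent, and then Lemma \ref{abelian} plus a Schur-type argument make $K$ pronilpotent, again dropping $ph(G)$ by one and invoking induction for $G$ itself rather than for $G/N$.
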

\begin{proof}
Recall that each subgroup and   each quotient of $G$ is in $\Y$. We prove the proposition by induction on  
$h=ph(G)$. If  $h=1$, then $G$ is pronilpotent and $G'$ is the 
Cartesian product of the derived subgroups $G'_p$ of  the Sylow subgroups $G_p$ of $G$. As $G\in\Y$, each $G'_p$ 
is \lc. Moreover $\pi(G')$ is finite, because otherwise $G$ would contain a commutator with infinite order, a contradiction. Therefore $G'$ is \lc, as required. 

Assume now that $h\ge 2$. The group $G$ has a characteristic series with
pronilpotent quotients  of length $h$. Let $N\le K$ be the last two terms of  the series. Thus, $K/N$ and $N$ 
are pronilpotent. Since $N$ is pronilpotent, our argument on the case $h=1$ yields that $N'$ is \lc. Factoring $N'$ out, we can assume that $N$ is abelian. By Lemma \ref{Nabel} there exists an open normal subgroup $H$ of $K$ such that $[N,H]$ is  \lc. Clearly we can assume that $H$ is normal in $G$ and pass to the quotient $G/[N,H]$. Thus, we can assume that $[N,H]=1$. 
 
Then $N \cap H$ is contained in the centre of $H$. As $H/N\cap H\cong HN/N \le K/N$ is pronilpotent, we conclude that $H$ is pronilpotent. 
Since in the case where $h=1$ the lemma holds, we conclude that $H'$ is \lc. We now pass to the quotient $G/H'$ and assume that $H'=1$.

In this case $H$ is an abelian subgroup of finite index in $K$. By Lemma \ref{abelian} $[H, K]$ is \lc. 
Factoring $[H,K]$ out, we obtain that $H$ is contained in the centre of $K$. Hence $K$ is nilpotent. 
By induction on $h$ the proposition follows. 
\end{proof}

\begin{lem}\label{Y-nonab} Let $H \in \Y$ be a Cartesian product of non-abelian finite simple groups. 
Then $H$ has finite exponent. 
\end{lem}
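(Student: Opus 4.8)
My plan is to prove that $H$ is periodic and then to exploit the fact that a Cartesian product of finite groups can be periodic only if it has finite exponent. The engine for periodicity is Ore's conjecture, established in \cite{lost}, which asserts that every element of a non-abelian finite simple group is a commutator. Write $H=\prod_{i\in I}S_i$ with each $S_i$ a non-abelian finite simple group and take an arbitrary $h=(h_i)_{i\in I}\in H$. For each $i$ I choose $a_i,b_i\in S_i$ with $h_i=[a_i,b_i]$ and set $a=(a_i)_{i\in I}$ and $b=(b_i)_{i\in I}$; then $a,b\in H$ and $h=[a,b]$, so every element of $H$ is a commutator. As $H\in\Y$, every commutator of $H$ has finite order, and therefore every element of $H$ has finite order, i.e. $H$ is periodic.

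It remains to promote periodicity to finite exponent, and here I would use the product structure directly. For a finite group $S$ let $m(S)$ denote the largest order of an element of $S$. I claim $M:=\sup_{i\in I} m(S_i)$ is finite. Were it infinite, I could choose distinct indices $i_1,i_2,\dots$ with $m(S_{i_k})\ge k$ together with elements $g_{i_k}\in S_{i_k}$ of order $m(S_{i_k})$; the element of $H$ equal to $g_{i_k}$ in coordinate $i_k$ and trivial in all other coordinates would then have infinite order, contradicting periodicity. Hence $M<\infty$, and every element of $H$ has order dividing $\mathrm{lcm}(1,2,\dots,M)$, so $H$ has finite exponent.

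The whole weight of the argument rests on Ore's conjecture: it is this deep, classification-dependent input that lets me write an arbitrary element of $H$ as a single commutator and so bring the hypothesis on commutators to bear. The only other point needing care is the step from \emph{periodic} to \emph{finite exponent}. This is specific to Cartesian products of finite groups, where an unbounded family of element orders can be spread over disjoint coordinates to build an element of infinite order; for arbitrary periodic profinite groups the analogous implication is precisely the open problem recalled in the Introduction, so I must be careful to exploit the product structure and not to appeal to any general principle. I note in passing that the second defining property of $\Y$, concerning derived subgroups of Sylow subgroups, does not seem to be needed for this particular lemma.
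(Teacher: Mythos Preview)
Your proof is correct and uses essentially the same approach as the paper's: Ore's conjecture applied componentwise to realise elements as commutators, together with the Cartesian product structure to pass between unbounded element orders and an element of infinite order. The paper simply merges your two steps into a single contradiction (assume infinite exponent, pick $s_{i_n}\in S_{i_n}$ of order $>n$, write $s_{i_n}=[x_{i_n},y_{i_n}]$ via Ore, and note that $[\prod_n x_{i_n},\prod_n y_{i_n}]$ is a commutator of infinite order), whereas you first deduce periodicity and then separately upgrade it to finite exponent; the content is identical.
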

\begin{proof}
Let $H$ be a Cartesian product of the non-abelian finite simple groups $S_i$, $i\in I$. If $H$ has infinite 
exponent, then for each 
$n \in \mathbb N$ there exists an index $i_n$ and an element $s_{i_n} \in S_{i_n}$ such that $|s_{i_n}| >n$. 
By the positive solution of Ore's Conjecture \cite{lost} each $s_{i_n}$ is a commutator, 
say $s_{i_n}=[x_{i_n},y_{i_n}]$. Then the element 
\[\left[\prod_{n \in \mathbb N} x_{i_n}, \prod_{n \in \mathbb N} y_{i_n}\right]= \prod_{n \in \mathbb N} [x_{i_n}, y_{i_n}]\] 
 has infinite order, a contradiction. 
\end{proof}

We can now complete the proof of Theorem \ref{main}.

\begin{proof}{\emph{of Theorem \ref{main}.}} Recall that we wish to prove that $G' $ is locally finite 
whenever $G\in\X$. By Lemma \ref{G'} for every group $G\in\X$ all Sylow subgroups of $G'$ are periodic. 
The theorem will follow once it is shown that
\begin{itemize}
\item[(*)]\label{Y} If $G\in\Y$ and all Sylow subgroups of $G'$ are periodic, then  $G' $ is \lc. 
\end{itemize}

If $2 \notin \pi(G')$,  then $G'$ is prosoluble \cite{FT} and so is $G$. Thus, by Proposition \ref{pi-prosol} 
$ph(G')$, and hence also $ph(G)$, are finite. In this case by Proposition \ref{prosol+h=>lc} $G'$ is \lc, as required. 

So, assume that $2 \in \pi(G')$.   
By  Theorem \ref{wil-series},  $G'$ has a finite series of closed
characteristic subgroups in which each factor either is pro-($2$-soluble)
or is isomorphic to a Cartesian product of non-abelian finite simple
groups.  Since  finite $2'$-groups are soluble, $G$ has a finite series of closed characteristic subgroups 
\[1=G_{s+1} < G_s  < \ldots < G_1 < G_0=G\] 
  in which each factor either is prosoluble 
or is isomorphic to a Cartesian product of non-abelian finite simple
groups; moreover we can assume $G_1 \le G'$.  

We will prove (*) by induction on the minimal number $n$ of non prosoluble 
 factors in this series. If $n=0$, then $G$ is prosoluble, and, as above, the theorem follows from Propositions \ref{pi-prosol} and \ref{prosol+h=>lc}. 

So let $n \ge 1$. If the last term $G_s$ is isomorphic to a Cartesian product of non-abelian finite simple groups, then  by Lemma \ref{Y-nonab} it is locally finite. Since the corresponding series of $G/G_s$ has $n-1$ non-prosoluble quotients, the result follows by induction.  

Therefore we assume that $G_s$ is prosoluble. Since $G_s \le G'$, all Sylow subgroups of $G_s$ are periodic. Hence, by Propositions  \ref{pi-prosol} and  \ref{prosol+h=>lc}, the derived subgroup $G'_s$ is \lc. Passing to the quotient $G/G'_s$, we can assume that $G_s$ abelian. Then Lemma \ref{Nabel} states that there exists an open normal subgroup $H$ of $G$ such that $[G_s,H]$ is \lc. Passing to the quotient $G/[G_s,H]$,
 we can assume that  $G_s\le Z(H)$.

Since $G_{s-1}/G_s$ is a Cartesian product of non-abelian finite simple groups, by Lemma \ref{Y-nonab} $G_{s-1}/G_s$ has finite exponent. Set $K=H \cap G_{s-1}$. Then $Z(K)\ge Z(H)\cap G_{s-1}\ge G_s$. It follows that  $K/Z(K)$ has finite exponent. A theorem of Mann states that if $B$ is a finite group such that $B/Z(B)$ has exponent $e$, then the exponent of $B'$ is bounded by a function of $e$ \cite{M}. Applying a profinite version of this theorem we deduce that the exponent of $K'$ is finite. Thus $K'$ is \lc. Passing to the quotient $G/K'$, we can assume that $K$ is abelian. 

Since $|G:H|$ is finite,  $K$ has finite index in $G_{s-1}$. By Lemma \ref{abelian} the subgroup $[K,G_{s-1}]$ is \lc. Passing to the quotient $G/[K,G_{s-1}]$ we assume that $K \le Z(G_{s-1})$. Again, by the profinite   version of  Mann's theorem, we conclude that $G'_{s-1}$ is \lc. Then we apply the inductive hypothesis to $G/G'_{s-1}$ and conclude that $G'$ is \lc. The proof is complete.
\end{proof}




\end{document}